\tikzstyle{vertex}=[circle,fill=black,inner sep=2pt]
\numberwithin{equation}{section}
\numberwithin{figure}{section}
\theoremstyle{plain}
\newtheorem{thm}{\protect\theoremname}
\theoremstyle{plain}
\theoremstyle{plain}
\newtheorem{lem}[thm]{\protect\lemmaname}
\numberwithin{thm}{section}
\theoremstyle{definition}
\newcommand{\eps}{\varepsilon}
\providecommand{\corollaryname}{Corollary}
\providecommand{\lemmaname}{Lemma}
\providecommand{\theoremname}{Theorem}
\providecommand{\conjecturename}{Conjecture}
\providecommand{\propositionname}{Proposition}
\providecommand{\problemname}{Problem}
\title{Set-coloring Ramsey numbers and error-correcting codes\\ near the zero-rate threshold}
\author{David Conlon\thanks{Department of Mathematics, California Institute of Technology, Pasadena, CA 91125. Email: {\tt dconlon@caltech.edu}. Research supported by NSF Award DMS-2054452.} \and
Jacob Fox\thanks{Department of Mathematics, Stanford University, Stanford, CA 94305. Email: {\tt jacobfox@stanford.edu}. Research supported by a Packard Fellowship and by NSF Awards DMS-1953990 and DMS-2154129.} \and
Huy Tuan Pham\thanks{Department of Mathematics, Stanford University, Stanford, CA 94305. Email: {\tt huypham@stanford.edu}. Research supported by a Two Sigma Fellowship.}
\and Yufei Zhao\thanks{Department of Mathematics, Massachusetts Institute of Technology, Cambridge, MA 02139. Email: {\tt yufeiz@mit.edu}. Research supported by NSF CAREER Award DMS-2044606, a Sloan Research Fellowship and the MIT Solomon
Buchsbaum Fund.}}
\date{}
\begin{document}

\maketitle

\begin{abstract} 
For positive integers $n,r,s$ with $r > s$, the set-coloring Ramsey number $R(n;r,s)$ is the minimum $N$ such that if every edge of the complete graph $K_N$ receives a set of $s$ colors from a palette of $r$ colors, then there is a subset of $n$ vertices where all of the edges between them receive a common color. 
If $n$ is fixed and $\frac{s}{r}$ is less than and bounded away from $1-\frac{1}{n-1}$, then $R(n;r,s)$ is known to grow exponentially in $r$, while if $\frac{s}{r}$ is greater than and bounded away from $1-\frac{1}{n-1}$, then $R(n;r,s)$ is bounded. Here we prove bounds for $R(n;r,s)$ in the intermediate range where $\frac{s}{r}$ is close to $1 - \frac{1}{n-1}$ by establishing a connection to the maximum size of error-correcting codes near the zero-rate threshold. 
\end{abstract}

\section{Introduction}

Two of the central problems in discrete mathematics are that of estimating the maximum size of error-correcting codes with given parameters and that of estimating Ramsey numbers. Here, building on recent work by an overlapping set of authors~\cite{CFHMSV}, we find a close connection between these two problems. More precisely, we show that the problem of estimating set-coloring Ramsey numbers, a natural generalization of the usual Ramsey numbers, and that of estimating the size of error-correcting codes near the zero-rate threshold are essentially the same problem. 

To say more, let $A_q(m,d)$ be the maximum size of a code $C \subseteq [q]^m$ of length $m$ in which any two codewords have Hamming distance at least $d$, i.e., they differ in at least $d$ coordinates. Such a code is called a $q$-ary code of length $m$ and distance $d$. The rate of the code is then defined as $(\log_q |C|)/m$. A result going back to work of Plotkin~\cite{Plot}, who treated the binary case, says that there are codes of positive rate, that is, with exponentially many elements, if $d < (1 - 1/q - \epsilon)m$ for any fixed $\epsilon > 0$ and no such codes if $d \geq (1-1/q)m$. That is, there is a threshold at distance $(1-1/q)m$ where the rate becomes zero. 

On the other hand, for any positive integers $n, r, s$ with $r > s$, we define the set-coloring Ramsey number $R(n; r, s)$ to be  
the minimum $N$ such that if every edge of $K_N$ receives a set of $s$ colors from a palette of $r$ colors, then there is guaranteed to be a monochromatic clique on $n$ vertices, that is, a copy of $K_n$ whose edges all share a common color. As a shorthand, it will be convenient for us to refer to such a set-coloring as an $(r, s)$-coloring of $K_N$. 

A priori, it is not clear that these quantities should have anything to do with one another. However, in~\cite{CFHMSV}, it was shown how to use the Gilbert--Varshamov bound, a standard lower bound for the size of codes, to show that for any $\eps > 0$ there exists $c > 0$ such that $R(n; r, s) \geq 2^{c rn}$ for any $r$ and $s$ with $\eps r < s < (1 - \eps)r$ and $n$ sufficiently large in terms of $r$, a result which is tight up to the constant $c$ (see also~\cite{ACMMM23} for an alternative approach with an improved bound on the constant $c$ in terms of $\eps$). Moreover, the following result was noted.

\begin{thm}[\cite{CFHMSV}] \label{thm:ramcode1}
For all positive integers $q, r, s$ with $r > s$, $R(q+1; r, s) \geq A_{q}(r, s) + 1$.
\end{thm}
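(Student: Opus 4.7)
The plan is to construct an $(r,s)$-coloring of the complete graph on $A_q(r,s)$ vertices that avoids a monochromatic $K_{q+1}$, which would immediately give $R(q+1;r,s) > A_q(r,s)$. The construction will use an extremal code as the source of randomness-free structure, translating the Hamming-distance condition directly into a Ramsey-avoidance condition.

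More concretely, let $C \subseteq [q]^r$ be a $q$-ary code of length $r$ and minimum distance $s$ with $|C| = A_q(r,s)$. I would identify the vertex set of $K_N$, where $N = |C|$, with the codewords of $C$. For each edge $\{u,v\}$, let $D(u,v) \subseteq [r]$ be the set of coordinates where $c_u$ and $c_v$ differ; by the distance assumption, $|D(u,v)| \geq s$, so one can select an arbitrary $s$-element subset $S(u,v) \subseteq D(u,v)$ and assign this subset as the set of colors on that edge. This produces a valid $(r,s)$-coloring of $K_N$.

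The key step is to verify that no monochromatic $K_{q+1}$ arises. Suppose, for contradiction, that some color $i \in [r]$ appears on every edge of a clique on vertices $v_0, v_1, \ldots, v_q$. Then $i \in S(v_j, v_k) \subseteq D(v_j, v_k)$ for every pair $j \neq k$, meaning the $i$-th coordinates of the codewords $c_{v_0}, c_{v_1}, \ldots, c_{v_q}$ are pairwise distinct. But there are $q+1$ such codewords and only $q$ possible values in $[q]$, a contradiction. Hence no monochromatic $K_{q+1}$ exists, and the claimed inequality $R(q+1;r,s) \geq A_q(r,s)+1$ follows.

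There is no real obstacle here; the proof is essentially a dictionary between the two parameters. The only minor care needed is that the set-coloring requires exactly $s$ colors per edge, which is why one must pass from $D(u,v)$ to an $s$-subset $S(u,v)$; the monochromatic argument still goes through because membership in $S(u,v)$ forces the corresponding coordinates to differ.
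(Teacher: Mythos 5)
Your proof is correct, and it is the standard argument: identify vertices with codewords, color each edge by an $s$-subset of the differing coordinates, and use pigeonhole on the $i$-th coordinate to rule out a monochromatic $K_{q+1}$. The paper itself only cites this result from \cite{CFHMSV} rather than reproving it, but your argument is exactly the one given there.
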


In particular, if $q$ is fixed, we see that, provided $s/r \leq 1 - 1/q - \eps$ for some fixed $\eps > 0$, $R(q+1; r, s)$ grows at least exponentially in $r$. Moreover, it was also shown in~\cite{CFHMSV} that if $s/r \geq 1 - 1/q + \eps$ for some fixed $\eps > 0$, then $R(q+1; r, s)$ is at most a constant depending only on $q$ and $\eps$. That is, for $q$ fixed, there is a threshold for $s/r$ at $1 - 1/q$ where the set-coloring Ramsey number $R(q+1; r, s)$ goes from growing exponentially in $r$ to being bounded.

In~\cite{CFHMSV}, it was suggested that perhaps Theorem~\ref{thm:ramcode1} is almost tight when $s/r$ is close to $1 - 1/q$. That this is indeed the case is our first new result.

\begin{thm} \label{thm:ramcode2}
For any positive integer $q$ and any $\epsilon>0$, there is $c>0$ such that if $r, s$ are positive integers with $s \le (1-1/q)r$ and $j = (1-1/q)r-s+1$, then 
\[
R(q+1;r,s) \le \max\left((1+\epsilon)A_q(r,s-cj),\epsilon s\right).
\]
Furthermore, if $q=p^i$ and $r=p^j$ are powers of a prime $p$ with $r \geq q$, then $R(q+1;r,s)\le (1+\epsilon)A_q(r,s-cj)$. 
\end{thm}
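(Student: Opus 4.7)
The plan is to construct a $q$-ary code from the coloring. Given an $(r,s)$-coloring of $K_N$ with no monochromatic $K_{q+1}$ and $N > \epsilon s$, I aim to produce codewords $c_v \in [q]^r$ for $v$ in a large subset $V^* \subseteq V(K_N)$ such that $|V^*| \geq N/(1+\epsilon)$ and the pairwise Hamming distance is at least $s - cj$. Combined with the trivial regime $N \leq \epsilon s$, this yields $N \leq \max((1+\epsilon) A_q(r, s-cj), \epsilon s)$ by the definition of $A_q$ and the forward direction (Theorem~\ref{thm:ramcode1}).

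For each color $i \in [r]$, the graph $G_i$ of edges whose color set contains $i$ is $K_{q+1}$-free by hypothesis, so Tur\'an's theorem gives $e(G_i) \leq (1 - 1/q)\binom{N}{2}$; setting $\delta_i = (1-1/q)\binom{N}{2} - e(G_i)$, one has $\sum_i \delta_i = (j-1)\binom{N}{2}$. The heart of the proof is a vertex-stability lemma: for each $G_i$ with $\delta_i$ below a small $q$-dependent multiple of $N^2$ (a ``good'' color), there is a subset $U_i \subseteq V$ with $|V \setminus U_i| \leq O_q(1 + \delta_i/N)$ such that $G_i[U_i]$ is $q$-partite. I would obtain this by taking $U_i$ to be the vertices of $G_i$-degree above the Andr\'asfai--Erd\H{o}s--S\'os threshold $\frac{3q-4}{3q-1} N + \eta$: a degree-sum identity yields the size bound on $|V \setminus U_i|$, while the AES theorem applied to $G_i[U_i]$ ensures $q$-chromaticity, and hence $q$-partiteness. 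Colors with larger deficit are ``bad''; by the total-deficit budget they number only $O_q(j)$ and contribute a lower-order error.

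Define $c_v(i)$ to be the part index of $v$ in the chosen $q$-partition of $G_i[U_i]$ when $i$ is good and $v \in U_i$, and arbitrarily otherwise. Let $B(v)$ count the ``unreliable'' coordinates $i$ (either $i$ is bad or $v \notin U_i$). Summing, $\sum_v B(v) \leq O_q(r) + O_q(jN)$, and the hypothesis $N > \epsilon s$ in the near-threshold regime (where $r$ and $s$ are comparable) gives $r = O(N/\epsilon)$, so $\sum_v B(v) = O_q(N/\epsilon) + O_q(jN)$. By Markov's inequality, choosing $c = c(q,\epsilon)$ of order $1/\epsilon^2$, the set $V^* = \{v : B(v) \leq cj/2\}$ has $|V^*| \geq N/(1+\epsilon)$. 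For $u, v \in V^*$, at least $s - cj$ coordinates $i \in S(uv)$ satisfy: $i$ is good and $u, v \in U_i$; on these coordinates the $q$-partiteness of $G_i[U_i]$ forces $c_u(i) \neq c_v(i)$, so the Hamming distance of $c_u$ and $c_v$ is at least $s - cj$.

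The hardest step will be establishing the vertex-stability bound with the sharp dependence $|V \setminus U_i| = O_q(1 + \delta_i/N)$; the additive $O_q(1)$ constant arising from AES is precisely what produces the $\epsilon s$ slack in the main theorem. In the prime-power case $q = p^i$, $r = p^j$, $r \geq q$, I would replace the AES-based argument by an explicit algebraic partition construction exploiting the $\mathbb{F}_r$-vector space structure over $\mathbb{F}_q$ (e.g., via affine hyperplanes or additive characters). This should produce partitions satisfying $|V \setminus U_i| = O_q(\delta_i/N)$ with no additive constant, eliminating the $O_q(r)$ term after summation over colors and hence allowing the $\epsilon s$ error to be removed.
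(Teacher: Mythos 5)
Your high-level architecture matches the paper's: for each color $i$, use a stability version of Tur\'an's theorem to extract a nearly-$q$-partite structure from the $K_{q+1}$-free graph $G_i$, encode each vertex $v$ by the tuple of part-indices across all colors, and use Markov's inequality to find a large set of vertices whose codewords are pairwise far apart. The paper formalizes the stability step as: every $K_{q+1}$-free graph $G$ on $N \geq 12q^2$ vertices has at most $(1-\frac1q)N^2/2 - \frac{N f_q(G)}{8q^2}$ edges, where $f_q(G)$ is the number of vertices one must delete to make $G$ $q$-colorable, and proves it by \emph{iteratively} deleting a vertex of degree at most $\frac{3q-4}{3q-1}|G(i)|$ until the Andr\'asfai--Erd\H{o}s--S\'os theorem applies.

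Your one-shot version of the stability step has a gap. You propose setting $U_i$ to be the vertices whose degree in $G_i$ (on all $N$ vertices) exceeds $\frac{3q-4}{3q-1}N + \eta$ and then applying AES to $G_i[U_i]$. But AES requires minimum degree greater than $\frac{3q-4}{3q-1}|U_i|$ \emph{inside} $G_i[U_i]$, and after removing $t = |V\setminus U_i|$ vertices the degrees drop by up to $t$: one needs $\eta > \frac{3t}{3q-1}$ for the AES hypothesis to survive, which competes with choosing $\eta$ small enough for the degree-sum bound on $t$ to be useful. More fundamentally, removing low-degree vertices can create new low-degree vertices, so a fixed threshold cutoff need not terminate with a graph satisfying the AES hypothesis. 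The iterative peeling in the paper's Lemma~3.1 sidesteps both problems and yields the clean inequality, which also makes your good/bad color split unnecessary: the single edge-count inequality $\binom{N}{2}s \leq \sum_i e(G_i)$ combined with the lemma immediately bounds $\sum_i f_q(G_i)$.

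The second and more serious issue is the prime-power case. You propose replacing the AES argument with an algebraic $q$-partition of $G_i[U_i]$ using the $\mathbb{F}_q$-structure of $\mathbb{F}_r$. But the graphs $G_i$ are arbitrary $K_{q+1}$-free graphs arising from an adversarial coloring; they carry no algebraic structure, so the prime-power hypotheses on the \emph{alphabet size and number of colors} cannot improve the \emph{graph-theoretic} stability lemma. This approach cannot be made to work. The paper's actual argument is much simpler and lives entirely on the coding side: generalized Hadamard matrices (Mackenzie--Seberry) give codes over $\mathbb{F}_q^r$ of size $qr$ and distance $(1-\frac1q)r \geq s$, so by Theorem~\ref{thm:ramcode1}, $N = R(q+1;r,s)-1 \geq A_q(r,s) \geq qr > s$, hence $N > \epsilon s$ automatically and the $\epsilon s$ branch of the maximum is vacuous. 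You should adopt this argument; the one you sketch does not close the gap.
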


We suspect that there may even be equality in Theorem~\ref{thm:ramcode1} when $s$ is sufficiently close to $(1-1/q)r$, though our methods fall somewhat short of proving this.

Having established this connection, we can use it to prove bounds on $R(q+1; r, s)$ when $s$ is close to $(1-1/q)r$ by studying the bounds for $A_q(r,s)$ in the same range. It turns out that the study of such bounds is a well-established topic in coding theory, particularly in the binary case. We have already mentioned the work of Plotkin above. More precisely, he showed that $A_{2}(r,r/2)\le2r$ and that $A_{2}(r,s)\le2\lfloor s/(2s-r)\rfloor$
for $s>r/2$, both of which are sometimes tight by considering Hadamard codes (see, for instance, \cite[Chapter
2]{MS}). 
More generally, Blake and Mullin \cite{BM} showed
that $A_{q}(r,s)\le\frac{qs}{qs-r(q-1)}$ when $s>(1-1/q)r$ and
it can also be shown that $A_{q}(r,(1-1/q)r) \le 2qr$. 

There has also been a great deal of work in the binary case for $s$ of the form $(r-j)/2$ (see Figure~\ref{fig:summary}).
For instance, using the linear programming bound, McEliece (see \cite[Chapter 17]{MS}) showed that $A_{2}(r,(r-j)/2)\le(1+o(1))r(j+2)$
for $j=o(r^{1/2})$. Sidel'nikov \cite{Si} constructed a code 
showing that McEliece's bound is asymptotically tight when $j=\Theta(r^{1/3})$.
In particular, he showed that $A_{2}(r,(r-j)/2)\ge r(j+2)+1$ for $r=(2^{4m}-1)/(2^{m}+1)$
and $j=2^{m}-1$. 
Later, Tiet\"{a}v\"{a}inen~\cite{T} (see also~\cite{KL}) showed that $A_{2}(r,(r-j)/2) = O(r\log(j+1))$
for $j=o(r^{1/3})$ and conjectured that $A_{2}(r,(r-j)/2)=O(r)$
in this range. Very recently, this conjecture was resolved in a strong form by Balla~\cite{Balla}, who showed that $A_{2}(r,(r-j)/2) \leq (2+o(1))r$
for $j=o(r^{1/3})$. That is, the bound remains close to the Plotkin bound in this range. 

\begin{figure}
\begin{center}

\begin{tabular}{|r|r|r|}
\hline
$j$&$A_2(r, (r-j)/2)$&Authors\\ 
\hline
$0$&$\le 2r$&Plotkin~\cite{Plot}\\
$o(r^{1/3})$&$O(r \log(j+1))$&Tiet\"av\"ainen~\cite{T}\\ 
$o(r^{1/3})$&$\le (2+o(1))r$&Balla~\cite{Balla}\\
$\Theta(r^{1/3})$&$\Theta(r^{4/3})$&Sidel'nikov~\cite{Si}\\
$o(r^{1/2})$&$\leq (1+o(1))r(j+2)$&McEliece~\cite{MS}\\
$2^c r^{1/2}$&$\geq r^c$&Pang et al.~\cite{PMP}\\
$\leq \sqrt{(k-1)r}$&$O(r^k)$&This paper\\
$\geq (k-1)\sqrt{r/2}$&$ \ge (rq)^{k/2}$&This paper\\
%$\varepsilon r$&$\geq 2^{cr}$&Conlon et al.~\cite{CFHMSV}\\
\hline
\end{tabular}
\end{center}
\caption{A summary of the known bounds for $A_2(r, (r-j)/2)$.} \label{fig:summary}
\end{figure}

In a recent paper, Pang, Mahdavifar and Pradhan
\cite{PMP} showed that $A_{2}(r,(r-2^{c}r^{1/2})/2)\ge r^{c}$ and that 
$A_{2}(r,(r-2\sqrt{r})/2)=O(r^{7/2})$ and $A_{2}(r,(r-4\sqrt{r})/2)=O(r^{15/2})$. 
We improve these bounds and, more generally, establish good bounds for 
$A_q(r, (1-1/q)(r-j))$ when $j$ is on the order of $\sqrt{r}$. Moreover, because of Theorems~\ref{thm:ramcode1} and~\ref{thm:ramcode2},
we get analogues, both of the bounds here and those mentioned above, for the corresponding set-coloring Ramsey numbers $R(q+1; r, s)$.

\begin{thm} \label{thm:codebounds}
If $k$ is a positive integer and $j\le\sqrt{(k-1)r/(q-1)}$, then
\[
A_{q}(r,(1-1/q)(r-j))=O_{q,k}(r^{k}).
\]
On the other hand, for any prime power $q$, there are infinitely many $r$ such that, for $j\ge(k-1)\sqrt{r/q}$,
\[
A_{q}(r,(1-1/q)(r-j))\ge(rq)^{k/2}.
\]
\end{thm}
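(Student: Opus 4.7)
The theorem consists of matching upper and lower bounds, which require rather different techniques.

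For the \textbf{upper bound}, let $C \subseteq [q]^r$ be a code with $|C| = N$ and minimum distance $d = (1-1/q)(r-j)$. My plan is to embed each codeword $c$ as the indicator vector $v_c \in \{0,1\}^{rq}$ defined by $v_c(i,a) = \mathbf{1}[c(i) = a]$, so that $\|v_c\|^2 = r$ and $\langle v_c, v_{c'} \rangle = r - d(c,c') \le r - d$ whenever $c \ne c'$. The key structural fact is that the $v_c$'s lie in an affine subspace of $\mathbb{R}^{rq}$ of linear dimension $r(q-1)$, so by the Schur product theorem, any matrix obtained by applying a polynomial of degree $k$ with nonnegative coefficients entry-wise to the Gram matrix $(MM^T)_{cc'} = \langle v_c, v_{c'} \rangle$ remains positive semidefinite and, crucially, has rank at most $\binom{r(q-1) + k}{k} = O_{q,k}(r^k)$. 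Writing $P_{cc'} = p(\langle v_c, v_{c'} \rangle)$ for such a polynomial $p$, the Cauchy--Schwarz inequality $(\operatorname{tr} P)^2 \le \operatorname{rank}(P) \operatorname{tr}(P^2)$ yields an inequality between $N$ and $p(r)$, $p(x)|_{x \in [0, r-d]}$. The central technical step is choosing the polynomial $p$: the straightforward choice $p(x) = x^k$ is not sharp enough in the sub-Plotkin regime, and instead one wants $p$ that (up to scaling) is small on the range $[0, r-d]$ of off-diagonal entries and has $p(r)$ as large as possible on the diagonal. A natural candidate is a Krawtchouk-type polynomial of degree $k$, and I expect this to give $N = O_{q,k}(r^k)$ precisely when $j \le \sqrt{(k-1)r/(q-1)}$.

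For the \textbf{lower bound}, the prime-power hypothesis and the $(rq)^{k/2}$ scaling strongly suggest an algebraic construction over $\mathbb{F}_q$ of dimension approximately $\tfrac{k}{2}(1 + \log_q r)$. I would consider, for an infinite family of $r = q^m - 1$, an evaluation / trace code: for each polynomial $p \in \mathbb{F}_{q^m}[x]$ of degree at most a suitable $O(k)$ threshold and each constant $\beta \in \mathbb{F}_q$, define a codeword by $c_{p,\beta}(i) = \operatorname{Tr}_{\mathbb{F}_{q^m}/\mathbb{F}_q}(p(\alpha^i)) + \beta$, where $\alpha$ is a primitive element of $\mathbb{F}_{q^m}$. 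Counting degrees of freedom gives $\sim q^{m\lceil k/2 \rceil} \approx (rq)^{k/2}$ codewords. The minimum distance is governed by the maximum number of zeros of $\operatorname{Tr}(p(x))$ for $p \ne 0$, which by the Weil / Carlitz--Uchiyama bound is at most $r/q + O((k-1)\sqrt{r/q})$; this translates to a minimum weight of $(1-1/q)(r - j)$ with $j = O((k-1)\sqrt{r/q})$, as required.

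\textbf{Main obstacle.} The principal difficulty lies in the upper bound: finding the sharp polynomial $p$ that makes the rank/Cauchy--Schwarz argument tight exactly at the stated threshold $j \le \sqrt{(k-1)r/(q-1)}$, and verifying that the entry-wise Schur product bound on the rank is used optimally. The lower bound, while requiring careful bookkeeping of parameters and possibly some adjustment (for instance, subfield subcode or twisted trace construction) to match the constants exactly, should essentially reduce to a standard Weil-type estimate on the number of zeros of trace polynomials.
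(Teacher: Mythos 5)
Your proposal takes genuinely different routes for both halves, but I believe neither route works to prove the theorem as stated.

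\textbf{Upper bound.} The paper proves the upper bound via Delsarte's linear programming bound, using the Christoffel--Darboux polynomial $P(x)=(K_k(a)K_{k+1}(x)-K_{k+1}(a)K_k(x))^2/(a-x)$ exactly as in McEliece--Rodemich--Rumsey--Welch (and its $q$-ary extension), and the threshold $j\le\sqrt{(k-1)r/(q-1)}$ comes out of Levenshtein's bound $\rho_k\le(1-1/q)(r-\sqrt{(k-1)r/(q-1)})$ on the smallest Krawtchouk root. Your rank/Cauchy--Schwarz argument is a weaker form of positivity and, in this regime, does not give any bound at all. Concretely, the inequality $(\operatorname{tr}P)^2\le\operatorname{rank}(P)\operatorname{tr}(P^2)$ rearranges to $N\bigl(p(r)^2-DM^2\bigr)\le Dp(r)^2$, where $D=\operatorname{rank}(P)=O_{q,k}(r^k)$ and $M=\max_{x\in[0,r-d]}|p(x)|$, so one needs $p(r)/M\gtrsim\sqrt D=\Omega_{q,k}(r^{k/2})$. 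But for $d$ near $(1-1/q)r$ the off-diagonal range $[0,r-d]$ has length a constant fraction of $r$, and by Chebyshev extremality a degree-$k$ polynomial can achieve $p(r)/M$ at most $T_k\bigl(\tfrac{r+d}{r-d}\bigr)\le C(q)^k$, which is bounded independently of $r$. So $p(r)^2-DM^2<0$ once $r$ is large and the argument is vacuous. The Schur product theorem and nonnegativity of coefficients don't rescue this; the threshold really does require the full LP positivity (Delsarte's inequality plus the fact that Krawtchouk products expand with nonnegative Krawtchouk coefficients), not merely a rank bound.

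\textbf{Lower bound.} The paper constructs the code by concatenating a generalized Hadamard inner code over $\mathbb{F}_q^u$ (size $qu$, distance $(1-1/q)u$, from \cite{MkS}) with a Reed--Solomon outer code of length and alphabet $qu$ and dimension $k$, giving length $r=qu^2$, size $(qu)^k=(rq)^{k/2}$, and distance exactly $(qu-k+1)(1-1/q)u=(1-1/q)(r-(k-1)\sqrt{r/q})$. Your trace-code construction is a plausible alternative, but you misstate the Weil error term: for a degree-$D$ polynomial over $\mathbb{F}_{q^m}$ the number of zeros of $\operatorname{Tr}(p(x))$ deviates from $q^{m-1}$ by $O\bigl((D-1)q^{m/2}\bigr)=O(D\sqrt r)$, not $O(D\sqrt{r/q})$ --- a factor $\sqrt q$ off. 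Tracking this through, the trace code gives $A_q(r,(1-1/q)(r-j))\ge(rq)^{k/2}$ only for $j$ on the order of $(k/2)\sqrt r$, which for $q$ large is strictly bigger than the claimed $(k-1)\sqrt{r/q}$ threshold. So the construction would establish only a weaker range of $j$. To match the theorem exactly you really want the Hadamard-plus-Reed--Solomon concatenation, where the distance computation is elementary and exact.
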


As a warm-up to our main result, in the next section we will prove a tight result for $R(3; 2s, s)$ (see also~\cite[Proposition 4.3]{CFHMSV} for another tight result). This quantity was recently studied, independently of the work in~\cite{CFHMSV}, in the master's thesis of Le~\cite{Le}. She showed that if there is a Hadamard matrix of order $2s$, then $R(3;2s,s) \geq 4s+1$. In the other direction, she gave an upper bound on $R(3;2s,s)$ which grows exponentially in $s$ and asked whether the gap can be closed. We answer this question by proving the following.

\begin{thm}\label{generalupperbound}
For all $s>1$, $R(3;2s,s) \leq 4s+1$. 
\end{thm}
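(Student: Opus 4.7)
The plan is to argue by contradiction: assume $K_{4s+1}$ admits a $(2s,s)$-coloring with no monochromatic triangle, and derive a contradiction for $s \ge 2$. Write $G_c$ for the graph of edges containing color $c$ and set $f_c(v) = \deg_{G_c}(v)$; by hypothesis every $G_c$ is triangle-free.

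The first step is the auxiliary Ramsey bound $R(3;2s-1,s) \le 2s+1$, which follows by a one-line edge count: in $K_{2s+1}$ with a $(2s-1,s)$-coloring and no monochromatic triangle, Mantel's theorem gives $|E(G_c)| \le \lfloor(2s+1)^2/4\rfloor = s(s+1)$ for each of the $2s-1$ colors, whence $\sum_c |E(G_c)| \le (2s-1)s(s+1) = 2s^3+s^2-s$, contradicting the identity $\sum_c |E(G_c)| = s\binom{2s+1}{2} = 2s^3+s^2$. Applied locally this yields a degree bound: $N_{G_c}(v)$ is independent in $G_c$ (else it forms a triangle with $v$), so the edges inside $N_{G_c}(v)$ avoid color $c$ and determine an induced $(2s-1,s)$-coloring of $K_{f_c(v)}$ with no monochromatic triangle, forcing $f_c(v) < R(3;2s-1,s)$, i.e., $f_c(v) \le 2s$. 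Combining with $\sum_c f_c(v) = s\cdot 4s = 4s^2 = 2s\cdot 2s$, the bound must be tight for every $v,c$; thus each $G_c$ is a $2s$-regular triangle-free graph on $4s+1$ vertices, and the proof reduces to showing no such graph exists when $s \ge 2$.

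I expect this final combinatorial step to be the main obstacle. Fix any vertex $v$ in a purported such graph $G$, and partition $V\setminus\{v\} = N(v) \cup W$ with $|N(v)| = |W| = 2s$. Since $N(v)$ is independent, each $u \in N(v)$ places all $2s-1$ remaining neighbors in $W$, missing a unique vertex $\sigma(u) \in W$. For any edge $\{w,w'\}$ in the induced subgraph on $W$, triangle-freeness of $G$ forbids any $u \in N(v)$ from being adjacent to both $w$ and $w'$, which is equivalent to $\sigma(u) \in \{w,w'\}$; summing over $u$ gives $|\sigma^{-1}(w)| + |\sigma^{-1}(w')| = 2s$. If $W$ contained two vertex-disjoint edges, $\sigma(N(v))$ would have to lie inside each of two disjoint pairs, which is impossible; so the graph on $W$ has matching number at most $1$, forcing it to be a star or a triangle. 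A triangle in $W$ is itself a triangle of $G$, so this case is excluded; hence the graph on $W$ is a star with center $w^*$ of $W$-degree $s$ and $s$ leaves of $W$-degree $1$. The identity $\deg_W(w) = |\sigma^{-1}(w)|$ (coming from $u \sim w \iff \sigma(u) \neq w$) then gives $|\sigma^{-1}(w^*)| = s$ and $|\sigma^{-1}(w_i)| = 1$ for each leaf $w_i$, so a pendant edge $\{w^*, w_i\}$ satisfies $|\sigma^{-1}(w^*)| + |\sigma^{-1}(w_i)| = s+1$, which equals $2s$ only when $s = 1$. For $s \ge 2$ this contradicts the identity derived above, completing the proof.
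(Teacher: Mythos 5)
Your proof is correct, and after the shared first step it diverges genuinely from the paper's argument. Both proofs begin with the same Mantel-based edge count giving $R(3;2s-1,s)\le 2s+1$. The paper then applies this bound to an independent set of size $2s+1$ inside a \emph{bipartite} color class, and establishes that some color class must be bipartite by invoking Brouwer's refinement of Mantel's theorem (non-bipartite triangle-free graphs on $N$ vertices have at most $\lfloor N^2/4\rfloor-\lfloor N/2\rfloor+1$ edges) together with a global double count. You instead apply the auxiliary bound \emph{locally} to the neighborhood $N_{G_c}(v)$ of each vertex in each color class, forcing every $G_c$ to be exactly $2s$-regular, and then rule out a $2s$-regular triangle-free graph on $4s+1$ vertices ($s\ge 2$) by a direct analysis: the map $\sigma$ recording each $u\in N(v)$'s unique non-neighbor in $W$, the identity $\deg_W(w)=|\sigma^{-1}(w)|$, the constraint $|\sigma^{-1}(w)|+|\sigma^{-1}(w')|=2s$ on edges of $G[W]$, and the matching-number-one classification forcing $G[W]$ to be a star with $s$ edges, whence $s+1=2s$. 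I checked each of these steps and they are sound (in particular $e(G[W])=s\ge 1$, so $G[W]$ is not edgeless, and the triangle case is excluded by triangle-freeness of $G$). Your route has the advantage of being entirely self-contained and elementary --- your final step is in effect a hands-on proof of the relevant special case of the Andr\'asfai--Erd\H{o}s--S\'os phenomenon (minimum degree $2s>2(4s+1)/5$ for $s\ge2$ forces bipartiteness, impossible for a regular graph on an odd number of vertices) --- whereas the paper's proof is shorter but outsources the key structural input to Brouwer's theorem.
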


Note that the assumption $s>1$ is needed in Theorem \ref{generalupperbound} as $R(3;2,1)=R(3;2)=6$. 
Moreover, since there is a Hadamard matrix of order $2s$ whenever $s = q + 1$ with $q \equiv 1 \pmod 4$ a prime power, 
we see that $R(3;2s,s)=4s+1$ for infinitely many $s$ and also that $R(3;2s,s) = (4+o(1))s$. 

\section{A tight result infinitely often}

In this short section, we prove Theorem~\ref{generalupperbound}, that $R(3;2s,s) \leq 4s+1$ for all $s > 1$, which, by Le's construction~\cite{Le}, is sharp for infinitely many $s$. We begin with the following result, which is essentially a special case of~\cite[Proposition 4.1]{CFHMSV}. 

\begin{lem}\label{easy}
If $r<2s$, then $R(3;r,s) \leq \frac{2s}{2s-r}+1$. In particular, $R(3;2s-1,s) \leq 2s+1$.  
\end{lem}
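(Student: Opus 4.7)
The plan is to use the fact that in a set-coloring without a monochromatic triangle, each color class individually forms a triangle-free graph, so we can apply Mantel's theorem to each color and then sum.

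Suppose, for contradiction, that $K_N$ admits an $(r,s)$-coloring with no monochromatic triangle, where $N$ is a positive integer. For each color $c \in [r]$, let $G_c$ be the graph on the same vertex set whose edges are precisely those edges of $K_N$ whose color set contains $c$. The hypothesis that there is no monochromatic triangle is equivalent to saying that each $G_c$ is triangle-free, so by Mantel's theorem we have $|E(G_c)| \le \lfloor N^2/4 \rfloor \le N^2/4$.

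Now I would double count. Each edge of $K_N$ is assigned exactly $s$ colors, so it contributes to exactly $s$ of the graphs $G_c$. Therefore
\[
s \binom{N}{2} = \sum_{c=1}^{r} |E(G_c)| \le r \cdot \frac{N^2}{4}.
\]
Rearranging gives $2s(N-1) \le rN$, i.e., $(2s-r)N \le 2s$. Since $r < 2s$, we can divide to obtain $N \le \frac{2s}{2s-r}$. This contradicts the assumption as soon as $N \ge \frac{2s}{2s-r} + 1$, which proves the bound $R(3;r,s) \le \frac{2s}{2s-r}+1$.

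Finally, the "in particular" statement is just the instance $r = 2s-1$, where $\frac{2s}{2s-r} = 2s$, giving $R(3;2s-1,s) \le 2s+1$. There is really no obstacle here: the only point to be careful about is the direction of the Mantel inequality and the fact that we are applying it to each color class separately, which is legitimate precisely because the set-coloring framework allows the individual color graphs to overlap (each edge appears in $s$ of them).
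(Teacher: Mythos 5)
Your proof is correct and is essentially identical to the paper's: apply Mantel's theorem to each triangle-free color class, double-count the total color multiplicity over all edges, and rearrange to get $N \le \frac{2s}{2s-r}$. The only difference is cosmetic (you frame it as a contradiction; the paper states it directly).
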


\begin{proof}
Consider an $(r,s)$-coloring of the edges of the complete graph on $N$ vertices with no monochromatic triangle. 
As each of the $r$ color classes is triangle-free, each color class has at most $N^2/4$ edges, so the total number of colors used on all edges is at most $rN^2/4$. On the other hand, as $s$ colors are used on each edge, the total number of colors used on all edges is $s{N \choose 2}$. Hence, $s{N \choose 2} \leq rN^2/4$. Simplifying, 
we get that $1-1/N \leq r/2s$ and so $N \leq \frac{2s}{2s-r}$. Thus, $R(3;r,s) \leq \frac{2s}{2s-r}+1$. 
\end{proof}

In the proof above, we used Mantel's theorem, the statement that any triangle-free graph on $N$ vertices has at most $\lfloor N^2/4 \rfloor$ edges. It is known that equality holds in Mantel's theorem if and only if the graph is a balanced complete bipartite graph. If, instead, we restrict attention to non-bipartite graphs, Mantel's theorem can be improved very slightly. This is the content of the following result of Brouwer~\cite{Brouwer}.

\begin{lem}[\cite{Brouwer}] \label{lem:Brouwer}
Any non-bipartite triangle-free graph on $N$ vertices has at most $\lfloor N^2/4 \rfloor -\lfloor N/2 \rfloor +1$ edges. In particular, when $N$ is odd, any such graph has at most $N^2/4 - N/2+5/4$ edges. 
\end{lem}

%Indeed, a result of Brouwer~\cite{Brouwer} implies that any non-bipartite triangle-free graph on $N$ vertices has at most $\lfloor N^2/4 \rfloor -\lfloor N/2 \rfloor +1$ edges. In particular, when $N$ is odd, any such graph has at most $N^2/4 - N/2+5/4$ edges. 

With this, we can now prove Theorem~\ref{generalupperbound}.

\begin{proof}[Proof of Theorem \ref{generalupperbound}]
Consider a $(2s,s)$-coloring of the edges of the complete graph on $N=4s+1$ vertices and suppose, for the sake of contradiction, that it has no monochromatic triangle. If one of the color classes 
has an independent set $S$ of size $2s+1$, then the coloring induced on the set $S$ is a $(2s-1,s)$-coloring and so, by Lemma \ref{easy}, $S$ must contain a monochromatic triangle, a contradiction. Since any bipartite graph on $4s+1$ vertices contains an independent set with $2s+1$ vertices, to complete the proof it suffices to show that at least one of the color classes is bipartite.  
But, if each color class is non-bipartite, Lemma~\ref{lem:Brouwer} implies that each color class has at most $N^2/4 - N/2+5/4$ edges, so the total number of colors on edges is at most $2s(N^2/4 - N/2+5/4)$. As the total number of colors on edges equals $s{N \choose 2}$, we would then obtain $s{N \choose 2} \leq 2s(N^2/4 - N/2+5/4)$. This simplifies to $N \leq 5$ or, equivalently, $s \leq 1$, contradicting our assumption that $s>1$ and completing the proof.  
\end{proof}

As a quick corollary of Theorem~\ref{generalupperbound}, applied in combination with Theorem~\ref{thm:ramcode1}, we see that $A_2(2s, s) \leq R(3;2s,s)-1 \leq 4s$, which is exactly the Plotkin bound in the binary case. As the Plotkin bound is known to be tight whenever there is a Hadamard matrix of order $2s$, this also returns Le's lower bound~\cite{Le} for $R(3; 2s, s)$.

\section{Codes from set colorings}

In this section, we prove Theorem~\ref{thm:ramcode2}, showing that the connection between codes and set-coloring Ramsey numbers discovered in~\cite{CFHMSV} goes both ways near the zero-rate threshold. We first state and prove a certain stability version of Tur\'an's theorem.

\subsection{Stability for Tur\'an's theorem}

Tur\'an's theorem is the natural generalization of Mantel's theorem to larger cliques. If we write $T_{N,q}$ for the Tur\'an graph, the balanced complete $q$-partite graph on $N$ vertices, Tur\'an's theorem \cite{Turan} then states that the Tur\'an graph $T_{N,q}$ is the unique $K_{q+1}$-free graph on $N$ vertices with the maximum number of edges. This maximum is therefore at most $(1-\frac{1}{q})N^2/2$ edges, with equality if and only if $N$ is a multiple of $q$. 

We wish to prove a stability version of Tur\'an's theorem, saying that any graph on $N$ vertices with nearly as many edges as $T_{N,q}$ can be made $q$-partite by deleting a small number of vertices. In the proof, we will make use of the following well-known result of Andr\'asfai, Erd\H{o}s and S\'os~\cite{AES}.

\begin{lem}[\cite{AES}] \label{lem:AES}
Every $K_{q+1}$-free graph on $N$ vertices with minimum degree larger than $\frac{3q-4}{3q-1}N = (1-\frac{1}{q - 1/3})N$ is $q$-partite.
\end{lem}

The stability result we need is now as follows.

\begin{lem}\label{turanstability}
Every $K_{q+1}$-free graph $G$ on $N \ge 12 q^2$ vertices has at most $(1-\frac{1}{q})N^2/2 -\frac{Nf_q(G)}{8q^2}$ edges, where $f_q(G)$ is the minimum $f$ such that $f$ vertices can be deleted from $G$ so that the remaining induced subgraph is $q$-colorable.
\end{lem}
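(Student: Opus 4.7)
The plan is to peel off low-degree vertices from $G$ using Andr\'asfai--Erd\H{o}s--S\'os (AES), delete exactly $f := f_q(G)$ of them, and then apply Tur\'an's theorem to what remains. Since AES guarantees that any $K_{q+1}$-free non-$q$-partite graph on $n$ vertices contains a vertex of degree at most $\frac{3q-4}{3q-1}n$, every deletion removes a vertex whose degree falls short of the Tur\'an average $\frac{q-1}{q}n$ by $\frac{n}{q(3q-1)}$; summing $f$ such savings is exactly what will produce the desired $\Omega(Nf/q^2)$ improvement over the Tur\'an bound.

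Concretely, I would set $G_0 := G$ and inductively define $G_{t+1} = G_t - v_{t+1}$ for $t = 0,\dots,f-1$, with $v_{t+1}$ supplied by AES applied to $G_t$. The small prerequisite at each step is $f_q(G_t) \ge 1$, which follows from the monotonicity $f_q(G_t) \ge f - t$: any set of vertices whose removal from $G_t$ yields a $q$-colorable graph, together with the $t$ vertices already deleted, witnesses $f_q(G)$. Applying Tur\'an to the $K_{q+1}$-free residue $G_f$ gives $e(G_f) \le \tfrac{1}{2}(1-1/q)(N-f)^2$, and summing along the deletion chain,
\[
e(G) \le \tfrac{1}{2}(1-1/q)(N-f)^2 + \tfrac{3q-4}{3q-1}\left(Nf - \tfrac{f(f-1)}{2}\right).
\]

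The rest is routine algebra. Subtracting $\tfrac{1}{2}(1-1/q)N^2$ from both sides and simplifying via the identity $\frac{q-1}{q} - \frac{3q-4}{3q-1} = \frac{1}{q(3q-1)}$, the excess collapses to $-\frac{Nf}{q(3q-1)} + \frac{f^2}{2q(3q-1)} + \frac{(3q-4)f}{2(3q-1)}$. Bounding $f^2 \le Nf$ on the middle term and invoking $N \ge 12q^2$, one checks that the whole expression is at most $-Nf/(8q^2)$, completing the proof. The main (mild) obstacle in making this work cleanly is the bookkeeping at this final step: the correction $-f(f-1)/2$ in the sum of peeled-off degrees is essential to cancel the positive $(1-1/q)f^2/2$ coming from expanding $(N-f)^2$, and without it the threshold $12q^2$ would need to be inflated substantially; retaining this correction is precisely what makes $N \ge 12q^2$ sufficient for the stated constant $1/(8q^2)$.
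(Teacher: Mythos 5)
Your proposal is correct and follows essentially the same route as the paper: peel off $f_q(G)$ low-degree vertices guaranteed by Andr\'asfai--Erd\H{o}s--S\'os (the paper phrases this as "peel while a low-degree vertex exists; AES forces the process to run at least $f$ steps," which is logically equivalent to your monotonicity observation $f_q(G_t)\ge f-t$), then apply Tur\'an to the residue and sum the degree savings. The algebra also matches: both arguments retain the $-f(f-1)/2$ correction in the sum of peeled degrees so that, after the cancellation via $\tfrac{q-1}{q}-\tfrac{3q-4}{3q-1}=\tfrac{1}{q(3q-1)}$, only the small residual $\tfrac{f^2}{2q(3q-1)}\le\tfrac{Nf}{2q(3q-1)}$ survives, and $N\ge 12q^2$ then gives the stated $-Nf/(8q^2)$.
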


\begin{proof}
Let $G(0)=G$. After defining $G(i)$, if $G(i)$ has a vertex $v_i$ of degree at most $\frac{3q-4}{3q-1}|G(i)|$, then let $G(i+1)$ be obtained from $G(i)$ by deleting $v_i$. Let $f=f_q(G)$. We must eventually define $G(f)$, as otherwise the process stops at some $G(i)$ with $i<f$ of minimum degree larger than $\frac{3q-4}{3q-1}|G(i)|$. But, by Lemma~\ref{lem:AES}, this $G(i)$ is a $q$-partite graph obtained from $G$ by deleting $i<f=f_q(G)$ vertices, contradicting the definition of $f_q(G)$. 

Since $G(f)$ is $K_{q+1}$-free, Tur\'an's theorem implies that $G(f)$ has at most $(1-\frac{1}{q})|G(f)|^2/2$ edges. Hence, since the degree of $v_i$ in $G(i)$ is at most $\frac{3q-4}{3q-1}(N-i)$ and $\frac{3q-4}{3q-1} = (1-\frac{1}{q}) - \frac{1}{q(3q-1)}$, the number of edges in $G$ is at most
$$e(G(f))+\sum_{i=0}^{f-1} \frac{3q-4}{3q-1}(N-i) \leq \left(1-\frac{1}{q}\right)N^2/2 + \frac f2 -\frac{Nf}{2q(3q-1)} \leq \left(1-\frac{1}{q}\right)N^2/2 -\frac{Nf}{8q^2},$$
as required.
\end{proof}

\subsection{From set colorings to error-correcting codes}

We will deduce Theorem~\ref{thm:ramcode2} from the following result.

\begin{thm} \label{thm:setscodes2}
Let $\lambda>1$ and $N=R(q+1;r,s)-1 \geq 12 q^2$. If $b=\left \lfloor 4\lambda q^2\left(\left(1-\frac{1}{q}\right)r-s+\frac{s}{N}\right) \right \rfloor \geq 0$, then 
$$A_q(r,s-2b) \geq \left(1-\frac{1}{\lambda}\right)N.$$ 
\end{thm}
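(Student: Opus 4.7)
The plan is to start with an $(r,s)$-coloring of $K_N$ having no monochromatic $K_{q+1}$, label its color classes $G_1, \ldots, G_r$, and convert proper $q$-colorings of (most of) each color class into coordinates of a $q$-ary code on the vertex set. Since each $G_i$ is $K_{q+1}$-free on $N \ge 12q^2$ vertices and $\sum_i e(G_i) = s\binom{N}{2}$, applying Lemma~\ref{turanstability} to each $G_i$ and summing gives
\[
\frac{N}{8q^2}\sum_{i=1}^r f_q(G_i) \le r\cdot \frac{(1-1/q)N^2}{2} - s\binom{N}{2} = \frac{N^2}{2}\left((1-1/q)r - s + \frac{s}{N}\right),
\]
so $\sum_i f_q(G_i) \le 4q^2 N((1-1/q)r - s + s/N)$. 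For each $i$ we fix a witness: a set $S_i$ of size $f_q(G_i)$ whose removal leaves $G_i$ properly $q$-colorable, together with a proper $q$-coloring $\chi_i\colon V(K_N) \setminus S_i \to [q]$.

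Next we would identify the ``good'' vertices via a Markov-type bound. Setting $t(u) = |\{i : u \in S_i\}|$ so that $\sum_u t(u) = \sum_i f_q(G_i)$, and $V^* = \{u : t(u) \le b\}$, the fact that each $u \notin V^*$ contributes at least $b+1$ to this sum yields $(b+1)|V \setminus V^*| \le 4q^2 N((1-1/q)r - s + s/N)$. Combined with the strict inequality $b+1 > 4\lambda q^2((1-1/q)r - s + s/N)$ that the floor in the definition of $b$ provides, we deduce $|V \setminus V^*| < N/\lambda$, i.e., $|V^*| > (1-1/\lambda)N$. Finally, to each $u \in V^*$ we assign the codeword $c_u \in [q]^r$ with $c_u(i) = \chi_i(u)$ when $u \notin S_i$ and $c_u(i) = 1$ otherwise. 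For distinct $u,v \in V^*$, any of the $s$ colors $i$ on the edge $\{u,v\}$ with $u,v \notin S_i$ forces $c_u(i) \ne c_v(i)$ (since $\{u,v\}$ is an edge of $G_i$ and $\chi_i$ is a proper $q$-coloring), while the remaining bad indices number at most $t(u) + t(v) \le 2b$; this leaves at least $s-2b$ coordinates of disagreement, so $A_q(r, s-2b) \ge |V^*| > (1-1/\lambda)N$.

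The conceptual key is recognizing that Lemma~\ref{turanstability}'s parameter $f_q(G_i)$ is exactly what the code construction needs: a proper $q$-coloring of $G_i \setminus S_i$ becomes the $i$-th coordinate of every codeword whose vertex lies outside $S_i$, and the lemma translates the total edge deficit across color classes into a global budget on how often vertices can be ``undefined'' across coordinates. The main subtlety is the arithmetic matching the strict inequality needed for $|V^*| > (1-1/\lambda)N$ with the floor in the definition of $b$; this is what forces the choice $V^* = \{u : t(u) \le b\}$ rather than $\{u : t(u) < b\}$ or a more naive rounded variant.
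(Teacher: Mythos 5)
Your proof is correct and follows essentially the same route as the paper's: decompose into color classes $G_i$, apply the Tur\'an stability lemma to bound $\sum_i f_q(G_i)$, define coordinates of a $q$-ary code from proper $q$-colorings of $G_i$ minus a small exceptional set, and pass to a large set of vertices lying in few exceptional sets. The only cosmetic difference is that you spell out the Markov step as a direct counting argument (using the strict inequality $b+1 > 4\lambda q^2((1-1/q)r - s + s/N)$ from the floor), whereas the paper simply cites Markov's inequality.
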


\begin{proof}
Consider an $(r,s)$-coloring of $K_N$ with $N=R(q+1;r,s)-1$ without a monochromatic $K_{q+1}$. Such a coloring exists from the definition of the set-coloring Ramsey number. Consider the $r$ graphs $G_1,\ldots,G_r$ on $V(K_N)$ where $E(G_i)$ is the set of edges of $K_N$ whose set of colors contains color $i$, so that $G_i$ is $K_{q+1}$-free and each edge of $K_N$ is an edge of exactly $s$ of these $r$ graphs. For each $G_i$, there is a set $U_i$ of $f_q(G_i)$ vertices such that the induced subgraph of $G_i$ upon deleting $U_i$ is $q$-partite. 
If we write $V_{i1},\ldots,V_{iq}$ for the $q$ resulting independent sets in $G_i$, then $V(K_N)$ can be written as the disjoint union $V(K_N)=U_i \sqcup V_{i1} \sqcup \dots \sqcup V_{iq}$. For each vertex $v \in V(K_N)$, let $x_i(v)=j$ if $v \in V_{ij}$ and otherwise let $x_i(v)$ be an arbitrary element of $[q]$. Then, for each $v \in V(K_N)$, we let $x(v)=(x_1(v),\ldots,x_r(v)) \in [q]^r$.

By counting over each edge $e$, the number of pairs $(e,i)$ with $e \in E(G_i)$ is ${N \choose 2}s$. On the other hand, counting over the color classes, the number of such pairs is also $\sum_{i=1}^r e(G_i)$. Hence, 
\[{N \choose 2}s = \sum_{i=1}^r e(G_i) \leq \sum_{i=1}^r \left(\left(1-\frac{1}{q}\right)N^2/2 -\frac{Nf_q(G_i)}{8q^2}\right) = \left(1-\frac{1}{q}\right)rN^2/2 - \frac{N}{8q^2}\sum_{i=1}^{r}f_q(G_i),\]
where the inequality is by Lemma \ref{turanstability}. Multiplying both sides by $\frac{8q^2}{N^2}$ and rearranging, we get 
$$N^{-1}\sum_{i=1}^r f_q(G_i) \leq 4q^2\left(\left(1-\frac{1}{q}\right)r-s+\frac{s}{N}\right):=M.$$
Since $\sum_{i=1}^r |U_i| = \sum_{i=1}^r f_q(G_i)$, 
Markov's inequality now implies that the number of vertices $v$ for which $v \in U_i$ for at least $\lambda M$ values of $i$ is at most $N/\lambda$. Hence, the set $V'$ of vertices $v$ for which $v \in U_i$ for at most $\lambda M$ values of $i$ satisfies $|V'| \geq N - N/\lambda = (1-\lambda^{-1})N$. Consider the collection of codewords $C=\{x(v):v \in V'\}$. For each pair of distinct vertices $u,v \in V'$, we have that $(u,v)$ is an edge of exactly $s$ graphs $G_i$. For each $G_i$ for which $(u,v) \in E(G_i)$ and neither $u$ nor $v$ is in $U_i$, we have $x_i(u) \not = x_i(v)$. Since $u$ and $v$ are each in at most $b = \lfloor \lambda M \rfloor$ of the sets $U_i$, there are at least $s-2b$ coordinates for which $u$ and $v$ must differ. Hence, since $C$ is a collection of codewords in $[q]^r$ in which each pair has distance at least $s-2b$, $|C| \leq A_q(r,s-2b)$. Since $|C| = |V'| \geq (1-\lambda^{-1})N$, this completes the proof.
\end{proof}

\begin{proof}[Proof of Theorem~\ref{thm:ramcode2}]
    If $N=R(q+1;r,s)-1 < \epsilon s$, then we are already done. We may therefore assume that $N \ge \epsilon s$. We apply Theorem \ref{thm:setscodes2} with $\lambda = 2/\epsilon$ to obtain 
    \[
    A_q(r,s-2b) \ge (1-\epsilon/2)N,
    \]
    where $b=\left \lfloor 4\lambda q^2\left(\left(1-\frac{1}{q}\right)r-s+\frac{s}{N}\right)\right\rfloor$. Note that $b \le cj/2$ where $j=\left(1-\frac{1}{q}\right)r-s+1$ for an appropriate constant $c>0$ depending only on $q$ and $\epsilon$. This implies that 
    \[
    R(q+1;r,s) \le (1+\epsilon)A_q(r,s-cj),
    \]
    as desired. 

    In the case where $q=p^i$ and $r=p^j$ are powers of the same prime $p$ with $r \geq q$, we show that $N > s$, which immediately gives the desired conclusion. Based on generalized Hadamard matrices, it is shown in \cite{MkS} that for such $q$ and $r$ there are codes over $\mathbb{F}_{q}^{r}$ with size $qr$ and distance $(1-1/q)r$. By Theorem \ref{thm:ramcode1}, this implies that $N \ge A_q(r,s) \ge qr > s$, as required.
\end{proof}

\section{Codes with large distance}

In this section, we prove our upper and lower bounds for $A_q(r,s)$, and hence $R(q+1; r,s)$, when $s$ is close to $(1-1/q)r$. For the upper bound, we will make use of Delsarte's linear programming bound~\cite{Del}, following a technique of McEliece, Rodemich, Rumsey and Welch~\cite{MRRW77} (see also Theorem 35 in \cite[Chapter 17]{MS}) and its extension to $q$-ary codes in \cite{A79}. If we define the Krawtchouk polynomials by
\[
K_{i}^{q,r}(x)=\sum_{j=0}^{i}(-1)^{j}(q-1)^{i-j}\binom{x}{j}\binom{r-x}{i-j}
\]
for any $0 \leq i \leq r$, then Delsarte's bound is as follows. 

\begin{lem}{\cite{Del}} \label{lem:Del}
If
$P(x)=\sum_{i}\beta_{i}K_{i}^{q,r}(x)$ is a linear combination of the $K_{i}^{q,r}$ with $\beta_0 > 0$ and $\beta_{i}\ge0$ for all
$i\ge1$ such that $P(d)\le0$ for all $D \le d\le r$, then 
\[
A_{q}(r,D)\le P(0)/\beta_{0}.
\]
\end{lem}

We are now ready to prove the upper bound in Theorem~\ref{thm:codebounds}. 

\begin{thm}
If $k$ is a positive integer and  $j\le\sqrt{(k-1)r/(q-1)}$, then
\[
A_{q}(r,(1-1/q)(r-j))=O_{q,k}(r^{k}).
\]
\end{thm}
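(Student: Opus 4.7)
The plan is to apply Delsarte's linear programming bound. The goal is to exhibit a polynomial $P(x)$ satisfying (i) its Krawtchouk expansion $P=\sum_{\ell}\beta_{\ell}K_{\ell}^{q,r}$ has $\beta_{0}>0$ and $\beta_{\ell}\ge 0$ for all $\ell\ge 1$; (ii) $P(x)\le 0$ for every integer $x\in[D,r]$, where $D=(1-1/q)(r-j)$; and (iii) $P(0)/\beta_{0}=O_{q,k}(r^{k})$. Delsarte's bound then yields $A_{q}(r,D)\le P(0)/\beta_{0}=O_{q,k}(r^{k})$, as desired.

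The polynomial $P$ will be constructed following the McEliece--Rodemich--Rumsey--Welch recipe in the $q$-ary form developed by Aaltonen. The first step is to locate the smallest real zero $\xi_{k}$ of the Krawtchouk polynomial $K_{k}^{q,r}(x)$. Classical estimates on zeros of Krawtchouk polynomials (Krasikov--Litsyn, Levenshtein) give
\[
\Bigl(1-\tfrac{1}{q}\Bigr)r-\xi_{k}\;\ge\;\Bigl(1-\tfrac{1}{q}\Bigr)\sqrt{\tfrac{(k-1)r}{q-1}}\,(1+o(1)),
\]
so the hypothesis $j\le\sqrt{(k-1)r/(q-1)}$ guarantees $D\ge\xi_{k}$. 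With $x_{0}$ chosen just below $\xi_{k}$, one then takes (up to a positive normalizing constant) the Christoffel--Darboux kernel
\[
P(x)\;=\;-\,\frac{\bigl(K_{k}^{q,r}(x_{0})\,K_{k-1}^{q,r}(x)-K_{k-1}^{q,r}(x_{0})\,K_{k}^{q,r}(x)\bigr)^{2}}{x-x_{0}},
\]
which is a genuine polynomial (of degree $2k-1$) because the bracket vanishes at $x_{0}$ and so is divisible by $x-x_{0}$. The sign condition $P\le 0$ on $[D,r]$ is immediate from $x>x_{0}$ on that interval combined with the squared numerator.

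The nonnegativity of the Krawtchouk coefficients $\beta_{\ell}$ is the technical heart of the MRRW/Aaltonen method: it follows from the Christoffel--Darboux identity for Krawtchouk polynomials together with the nonnegativity of the linearization coefficients of products of Krawtchouk polynomials in the Hamming association scheme. Granted this, an explicit computation using $K_{i}^{q,r}(0)=\binom{r}{i}(q-1)^{i}=O_{q,k}(r^{i})$ gives $P(0)=O_{q,k}(r^{2k-1})$, while evaluating the Christoffel--Darboux kernel at the origin gives $\beta_{0}=\Theta_{q,k}(r^{k-1})$; taking the ratio yields the claimed bound $P(0)/\beta_{0}=O_{q,k}(r^{k})$.

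The hardest step will be carefully verifying the Krawtchouk positivity of $P$ (by invoking the appropriate MRRW/Aaltonen positivity lemma adapted to the Hamming scheme) and pinning down the precise scaling $\beta_{0}=\Theta_{q,k}(r^{k-1})$ via the kernel computation at $x=0$. The location estimate for $\xi_{k}$ is classical, and the sign condition on $P$ is then immediate from the construction. The hypothesis $j\le\sqrt{(k-1)r/(q-1)}$ enters in exactly one place, namely the inequality $D\ge\xi_{k}$, so the extent to which one can relax this hypothesis is limited by how close the Krawtchouk zero $\xi_{k}$ can be pushed to $(1-1/q)r$ --- a classical obstruction coming from the Krawtchouk spectral gap.
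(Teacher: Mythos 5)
Your overall plan — Delsarte's LP bound applied to a Christoffel--Darboux kernel in the MRRW/Aaltonen style, with the hypothesis $j\le\sqrt{(k-1)r/(q-1)}$ entering only through the location of a Krawtchouk zero — is exactly the paper's approach. However, there is a genuine error in the choice of parameters that makes the positivity claim fail. You build $P$ out of the pair $K_{k-1},K_{k}$ and put the pivot $x_{0}$ ``just below'' $\xi_{k}=\rho_{k}$, the first zero of $K_{k}$. For that pair, the MRRW positivity argument needs $x_{0}$ strictly between the first zeros of the two polynomials, i.e.\ $\rho_{k}<x_{0}<\rho_{k-1}$ (so $x_{0}$ just \emph{above} $\xi_{k}$). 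Expanding $P$ by Christoffel--Darboux produces, besides terms proportional to $K_{j}(x_{0})K_{k-1}(x_{0})\ge 0$, also terms proportional to $-K_{j}(x_{0})K_{k}(x_{0})$ for $j\le k-1$. If $x_{0}<\rho_{k}$, then every $K_{\ell}(x_{0})>0$ for $\ell\le k$, so these coefficients are strictly negative, and the Krawtchouk expansion of $P$ has negative coefficients. Your positivity lemma does not apply here; the appeal to ``the appropriate MRRW/Aaltonen positivity lemma'' is not a black box you can invoke on the wrong side of the zero.

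The paper's fix is to go one index higher: take the kernel built from $K_{k}$ and $K_{k+1}$ and choose $a\in(\rho_{k+1},\rho_{k})$ (which is the same side of $\rho_{k}$ as your $x_{0}$, so your location estimate and the deduction $D\ge\rho_{k}$ still do exactly the right work). This changes the degree of $P$ to $2k+1$, not $2k-1$, and your claimed scalings $P(0)=O(r^{2k-1})$ and $\beta_{0}=\Theta(r^{k-1})$ no longer match the construction; moreover, even after the fix, a generic $a$ does not obviously give $P(0)/\beta_{0}=O(r^{k})$. The paper obtains the clean bound by additionally optimizing the free parameter: picking $a_{0}$ with $K_{k+1}(a_{0})/K_{k}(a_{0})=-K_{k+1}(0)/K_{k}(0)$, which collapses $P(0)/\beta_{0}$ to $\tfrac{4(k+1)(q-1)^{k+1}\binom{r}{k+1}}{q\,a_{0}}=O_{q,k}(r^{k})$ using $a_{0}=\Omega_{q,k}(r)$. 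So to repair your argument you should (i) use the pair $K_{k},K_{k+1}$ with $a\in(\rho_{k+1},\rho_{k})$, and (ii) include the normalization step that chooses $a_{0}$ to balance the two terms in the numerator of $P(0)$; without (ii) the estimate $P(0)/\beta_{0}$ does not reduce to $O_{q,k}(r^{k})$ by crude degree counting.
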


\begin{proof}
Our argument will largely follow the proof from~\cite{MRRW77}. We refer to this paper and to~\cite{Lev} for the standard properties of Krawtchouk polynomials. Note that throughout the proof, for clarity of presentation, we will systematically omit the superscripts in the notation for Krawtchouk polynomials.

For $a < (1-1/q)(r-j)$, consider the polynomial 
\[
P(x)=\frac{(K_{i}(a)K_{i+1}(x)-K_{i+1}(a)K_{i}(x))^{2}}{a-x},
\]
noting that $P(d)\le0$ for all $d\ge (1-1/q)(r-j)$. 
By the Christoffel--Darboux
formula (see~\cite[Corollary 3.5]{Lev})
\[
\frac{K_{i}(a)K_{i+1}(x)-K_{i+1}(a)K_{i}(x)}{x-a}=-\frac{q}{i+1}\binom{r}{i}(q-1)^{i}\sum_{j=0}^{i}\frac{K_{j}(x)K_{j}(a)}{\binom{r}{j}(q-1)^{j}},
\]
we have
\begin{align*}
&P(x) =\frac{q}{i+1}\binom{r}{i}(q-1)^{i}(K_{i}(a)K_{i+1}(x)-K_{i+1}(a)K_{i}(x))\sum_{j=0}^{i}\frac{K_{j}(x)K_{j}(a)}{\binom{r}{j}(q-1)^{j}}\\
 & =-\frac{q}{i+1}\binom{r}{i}(q-1)^{i}\sum_{j=0}^{i}K_{j}(x)K_{i}(x)\cdot\frac{K_{j}(a)K_{i+1}(a)}{\binom{r}{j}(q-1)^{j}}+\frac{q}{i+1}\binom{r}{i}(q-1)^{i}\sum_{j=0}^{i}K_{j}(x)K_{i+1}(x)\cdot\frac{K_{j}(a)K_{i}(a)}{\binom{r}{j}(q-1)^{j}}.
\end{align*}
We will make use of the following properties of Krawtchouk polynomials: 
$K_{i}(x)K_{j}(x)$ is a nonnegative combination of the $K_{\ell}(x)$;\footnote{This should be taken as meaning that the values of the two polynomials are equal for all $x = 0, 1, \dots, r$, but this is sufficient for our application of Delsarte's bound.}
the $K_{\ell}$ are orthogonal under the bilinear form $\langle f,g\rangle=\sum_{j=0}^{r}\binom{r}{j}(q-1)^{j}f(j)g(j)$ with $\langle K_i, K_i\rangle= q^{r}(q-1)^{i}\binom{r}{i}$;
and if $\rho_{i}$ is the smallest positive root of $K_{i}$,
then $\rho_{i}>\rho_{i+1}$ and there are no other roots of $K_{i+1}$ in $(\rho_{i+1}, \rho_i)$. We also have 
\[
\beta_{0}=q^{-r}\sum_{x=0}^{r}\binom{r}{x}(q-1)^{x}P(x),\qquad K_{i}(0)=(q-1)^{i}\binom{r}{i}.
\]

If $a$ is such that $\rho_{i+1}<a<\rho_{i}$, then 
$K_{j}(a)K_{i+1}(a)\le0$ and $K_{j}(a)K_{i}(a)\ge0$ for all $j\le i$. Therefore, $P(x)=\sum_{i}\beta_{i}K_{i}^{q,r}(x)$ is a linear combination of the $K_{i}^{q,r}$ with $\beta_{i}\ge0$ for all
$i\ge1$. 
Moreover, by orthogonality, we have that
\begin{align*}
\beta_{0} & =q^{-r}\sum_{x=0}^{r}\binom{r}{x}(q-1)^{x}P(x)=-q^{-r}\frac{q}{i+1}\binom{r}{i}(q-1)^{i}\cdot\frac{K_{i}(a)K_{i+1}(a)}{\binom{r}{i}(q-1)^{i}}\cdot q^{r}(q-1)^{i}\binom{r}{i}\\
 & =-\frac{q}{i+1}(q-1)^{i}\binom{r}{i}K_{i}(a)K_{i+1}(a)
\end{align*}
and 
\[
P(0)=\frac{(K_{i}(a)K_{i+1}(0)-K_{i+1}(a)K_{i}(0))^{2}}{a}.
\]
Hence, if $\rho_{i+1}<a<\rho_{i}$, Lemma~\ref{lem:Del} implies that 
\begin{align*}
A_{q}(r,(1-1/q)(r-j)) & \le P(0)/\beta_{0}=-\frac{(K_{i}(a)K_{i+1}(0)-K_{i+1}(a)K_{i}(0))^{2}}{a\frac{q}{i+1}(q-1)^{i}\binom{r}{i}K_{i}(a)K_{i+1}(a)}.
\end{align*}
Noting that $K_{i+1}(a)/K_{i}(a)$ ranges from $0$ to $-\infty$
as $a$ goes from $\rho_{i+1}$ to $\rho_{i}$, we can find $a_0$ in that range such
that $K_{i+1}(a_0)/K_{i}(a_0) = - K_{i+1}(0)/K_{i}(0)$ and
\[
-\frac{(K_{i}(a_0)K_{i+1}(0)-K_{i+1}(a_0)K_{i}(0))^{2}}{a_0\frac{q}{i+1}(q-1)^{i}\binom{r}{i}K_{i}(a_0)K_{i+1}(a_0)}=\frac{4 K_i(0) K_{i+1}(0)}{a_0\frac{q}{i+1}(q-1)^{i}\binom{r}{i}}
=\frac{4(q-1)^{i+1}\binom{r}{i+1}}{a_0\frac{q}{i+1}}\le\frac{4(i+1)(q-1)^{i+1}\binom{r}{i+1}}{q\rho_{i+1}}.
\]

Now we note that $K_{1}(x)=(q-1)(r-x)-x$, so that $\rho_{1}=(1-1/q)r$ and
$K_{2}(x)=(q-1)^{2}\binom{r-x}{2}-(q-1)x(r-x)+\binom{x}{2}$, so that $\rho_{2} \leq (1-1/q)(r-\sqrt{r/(q-1)})$.
Writing $h_k$ for the largest root of the $k$-th Hermite polynomial, we also have the general bound (see \cite[Corollary 6.1]{Lev})
\[
\rho_{k}\le (1-1/q)r-\frac{q-2}{2q}h_{k}^{2}-\frac{\sqrt{2(q-1)(r-k+2)}h_{k}}{q} \leq (1-1/q)(r - \sqrt{(k-1)r/(q-1)}),
\]
where we used that $h_{k}>\sqrt{(k-1)/2}$ for $k>2$ and that $r$ may be taken sufficiently large in $q$ and $k$. 
For $j \leq \sqrt{(k-1)r/(q-1)}$, we may therefore pick any $a$ with $\rho_{k+1} < a < \rho_{k}$  and it will automatically satisfy $a < (1-1/q)(r-j)$. Therefore, taking $a = a_0$ as in the calculation above, we find that 
\[A_q(r, (1-1/q)(r-j)) \leq \frac{4(k+1)(q-1)^{k+1}\binom{r}{k+1}}{q\rho_{k+1}} = O_{q,k}(r^k),\]
where we used that $\rho_{k+1} = \Omega_{q,k}(r)$ (see~\cite[Equation 125]{Lev}).
\end{proof}

We now prove the lower bound in Theorem~\ref{thm:codebounds}, which follows from concatenating appropriate codes.

\begin{thm}
For any positive integer $k$ and any prime power $q$, there are infinitely many $r$ such that, for $j\ge(k-1)\sqrt{r/q}$,
\[
A_{q}(r,(1-1/q)(r-j))\ge(rq)^{k/2}.
\]
\end{thm}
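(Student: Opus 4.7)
The plan is to construct the required code by concatenation, using a generalized Hadamard-type inner code together with a Reed--Solomon outer code. This is particularly natural here, since the very same inner code construction of \cite{MkS} was already used in the proof of Theorem~\ref{thm:ramcode2}, and the aim is to match the distance defect $(k-1)q^t$ produced by Reed--Solomon with the threshold $(k-1)\sqrt{r/q}$ appearing in the statement.

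Concretely, I would fix a non-negative integer $t$ and set $r_0 = q^t$, $n = q^{t+1}$, and $r = n r_0 = q^{2t+1}$; as $t$ varies this already yields infinitely many valid $r$. For the inner code, invoke \cite{MkS} to obtain $C_{\mathrm{in}} \subseteq \mathbb{F}_q^{r_0}$ of size $q r_0 = q^{t+1}$ and minimum distance $(1 - 1/q) r_0$. For the outer code, take a (possibly extended) Reed--Solomon code $C_{\mathrm{out}}$ over $\mathbb{F}_{q^{t+1}}$ of length $n$ and dimension $k$, which has $(q^{t+1})^k$ codewords and minimum distance $n - k + 1$. Then identify $\mathbb{F}_{q^{t+1}}$ with $C_{\mathrm{in}}$ by any bijection and concatenate to form a code $C \subseteq \mathbb{F}_q^r$.

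It remains to verify the parameters. By the standard concatenation bound, $|C| = |C_{\mathrm{out}}| = q^{k(t+1)} = (qr)^{k/2}$, and the minimum distance of $C$ is at least $(n - k + 1)(1 - 1/q) r_0 = (1-1/q)(r - (k-1) q^t)$. Whenever $j \geq (k-1)\sqrt{r/q} = (k-1) q^t$, the target distance $(1-1/q)(r-j)$ is no larger than this bound, so $C$ witnesses $A_q(r, (1-1/q)(r-j)) \geq (qr)^{k/2}$, as required.

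I do not anticipate any serious obstacle: the key arithmetic is that $|C_{\mathrm{out}}|$ has exactly $(q^{t+1})^k = (qr)^{k/2}$ codewords because the outer alphabet size $q^{t+1}$ matches $|C_{\mathrm{in}}|$, and this matching is simultaneously what forces the distance defect to equal $(k-1)q^t = (k-1)\sqrt{r/q}$. The only routine thing to double-check is the concatenation distance bound itself, which follows from the classical observation that any two distinct outer codewords differ in at least $n - k + 1$ coordinates, each contributing at least $(1 - 1/q) r_0$ to the $\mathbb{F}_q$-Hamming distance after inner encoding.
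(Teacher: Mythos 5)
Your proof is correct and takes essentially the same route as the paper: concatenate a generalized Hadamard inner code from \cite{MkS} with a Reed--Solomon outer code whose alphabet size matches the inner code's size. The only (immaterial) difference is that you specialize the inner length to $r_0 = q^t$ while the paper allows any $u = p^j$ with $j \geq i$ (for $q = p^i$), but both yield the same family $r = qu^2$ and the same bound; just note that you should take $t \geq 1$ so the \cite{MkS} hypothesis $j \geq i$ holds.
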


\begin{proof}
Based on generalized Hadamard matrices, it is shown in \cite{MkS}
that if $q = p^i$ and $u = p^j$ for some $j \geq i$, then there exist codes over $\mathbb{F}_{q}^{u}$ with size $qu$
and distance $(1-1/q)u$. We also recall that the Reed--Solomon code is
a code over $\mathbb{F}_{s}^{n}$ with size $s^{k}$ and distance
$n-k+1$, where $s$ is a prime power at least $n$. 

We consider a concatenation code with the generalized Hadamard
code as the inner code and the Reed--Solomon code as the outer code.
More explicitly, let ${\cal C}_{i}$ be a code over $\mathbb{F}_{q}^{u}$
with size $qu$ and distance $(1-1/q)u$ and let ${\cal C}_{o}\subseteq\mathbb{F}_{s}^{n}$
be the Reed--Solomon code where we choose $s = n = qu$ to be a prime power. 
The concatenation code is formed by  considering ${\cal C}_{o}$ as a subset of $[qu]^{n}$ through a bijection $\phi:[s]\to qu$ and then using the inner code ${\cal C}_{i}$ to map each element of $[qu]^n$ term by term to a subset of $(\mathbb{F}_{q}^{u})^n = \mathbb{F}_{q}^{un}$. Since it is easy to see that the distance of a concatenated code is at least the product of the distances of the inner and outer codes, this gives
a code in $\mathbb{F}_{q}^{un}$ with distance at least $(n-k+1)(1-1/q)u$
and size $s^{k}$. Letting $r=un=qu^{2}$, we see that the distance of the
code is at least $(1-1/q)(r-(k-1)\sqrt{r/q})$ and the size of the
code is $(rq)^{k/2}$. 
\end{proof}

\section{Concluding remarks}

Using Theorems~\ref{thm:ramcode1} and \ref{thm:ramcode2} to turn back to $R(3; r, (r-j)/2)$, the picture that emerges is a rather complex one, with the function exhibiting a range of different behaviours depending on the value of $j$. When $r$ is even and $j = 0$, Theorem~\ref{generalupperbound} gives the exact value $R(3;r,r/2) = 2r + 1$. Increasing $j$, the result of Balla~\cite{Balla} discussed in the introduction tells us that $R(3; r, (r-j)/2)$ remains close to $2r$ until $j$ reaches roughly $r^{1/3}$, where the result of Sidel'nikov \cite{Si} shows that the value jumps to $r^{1 + \epsilon}$ for some $\epsilon > 0$. The function is at most roughly $rj$ until $j$ passes $\sqrt{r}$, where the results of Pang, Mahdavifar and Pradhan~\cite{PMP}, which our Theorem~\ref{thm:codebounds} refines, show that the function starts to grow as an arbitrary power of $r$. By the time $j$ is linear in $r$, the bound becomes exponential in $r$ and it is possible (though not generally expected) that the bound jumps to superexponential as $j$ approaches $r$. 

This summary raises many questions, not least of which is whether there are further jumps in behaviour as $j$ passes from $\sqrt{r}$ to $r$. It would also be interesting to decide whether any aspects of the picture drawn above change as the clique size goes from $3$ to $4$ and beyond. For instance, does the shift from the linear to the polynomially superlinear regime for $R(4; r, 2(r-j)/3)$ still happen when $j$ is roughly $r^{1/3}$?

\vspace{3mm}
\noindent
{\bf Acknowledgements.} 
We would like to thank Xiaoyu He, Dhruv Mubayi, Andrew Suk and Jacques Verstra\"ete for helpful conversations. We are also grateful to Maria Axenovich for bringing the work of Yen Hoang Le to our attention.

\end{document}